\newtheorem{thm}{Theorem}
\newtheorem{cor}{Corollary}
\newtheorem{defn}{Definition}
\newenvironment{proof}{\quad {\it Proof.\,}}{\hfill \IEEEQED \par}
\newenvironment{proof-of}[1]{{\it Proof of #1:\,}}{\hfill\QED\par}
\newtheorem{rem}{Remark}
\newtheorem{example}{Example}
\newcommand{\R}{{\mathbb R}}
\renewcommand{\S}{{\mathbb S}}
\newcommand{\cT}{{\mathcal T}}
\newcommand{\cFW}{{\mathcal{FW}}}
\newcommand{\tr}{\textrm{trace}}
\newcommand{\bfone}{\mathbf{1}}
\begin{document}
\title{Block Factor-Width-Two Matrices in Semidefinite Programming}
\author{Aivar Sootla, Yang Zheng, and Antonis Papachristodoulou
\thanks{The authors are with the Department of Engineering Science, University of Oxford, Parks Road, Oxford, OX1 3PJ, U.K. e-mail: \{aivar.sootla, yang.zheng, antonis\}@eng.ox.ac.uk.  AS and AP are supported by EPSRC Grant EP/M002454/1, YZ is supported in part by the Clarendon Scholarship,
	and in part by the Jason Hu Scholarship. }}
\maketitle

\begin{abstract}
	In this paper, we introduce a set of block factor-width-two matrices, which is a generalisation of factor-width-two matrices and is a subset of positive semidefinite matrices. The set of block factor-width-two matrices is a proper cone and we compute a closed-form expression for its dual cone. We use these cones to build hierarchies of inner and outer approximations of the cone of positive semidefinite matrices. The main feature of these cones is that they enable a decomposition of a large semidefinite constraint into a number of smaller semidefinite constraints. As the main application of these classes of matrices, we envision large-scale semidefinite feasibility optimisation programs including sum-of-squares (SOS) programs. We present numerical examples from SOS optimisation showcasing the properties of this decomposition.
\end{abstract}
\section{Introduction}

Optimisation programs with positive semidefinite (PSD) constraints (or semidefinite programs --- SDPs) are one of the major computational tools in linear systems theory~\cite{boyd2004convex,boyd1994linear}. The introduction of sum-of-squares polynomial optimisation (or SOS programming)~\cite{powers1998algorithm, parrilo2003semidefinite} (and the dual moment approach~\cite{lasserre2010moments}) extended the use of SDPs to polynomial optimisation and thus allowed addressing many nonlinear control problems in polynomial time.

Modern SDPs (and especially SOS programs) are often large-scale, that is, the PSD constraints have large dimensions. Consequently, developing fast SDP solvers has received considerable attention in the literature. Solvers for sparse programs were developed in~\cite{zheng2016fast, zheng2016admmdual, zheng2017chordal} (ADMM-based) and in~\cite{nakata2003exploiting, kim2011exploiting} (interior-point solver) and a general purpose ADMM-based solver was developed in~\cite{ocpb:16}. The sparsity of the PSD constraint was also exploited in the context of SOS programming~\cite{waki2006sums, zheng2018sparse, zheng2018decomposition}. The key idea in these sparsity-exploiting approaches is to decompose large PSD constraints into a number of smaller PSD constraints, while the optimal objective of the program remains the same for a special class of sparsity patterns~\cite{vandenberghe2015chordal}. Since the PSD constraint typically induces the largest computational burden, the computational time can be significantly reduced by using these techniques. These sparsity exploiting techniques can also be used for linear control applications~\cite{chordalZheng2018ecc}.

A related approach to speed-up SOS programming was taken in~\cite{ahmadi2017dsos}, where the authors replaced the PSD cone with the cone of \emph{factor-width-two matrices} (which we denote $\cFW_2^N$ where $N$ stands for the dimension of the matrix). A matrix has a factor width two if it can be represented as a sum of rank two PSD matrices~\cite{boman2005factor} and hence it is also PSD. A certificate for $\cFW_2^N$ matrices can be written as a number of second-order cone constraints, which can reduce the computational and memory burden as demonstrated in~\cite{ahmadi2017dsos}. We note that $\cFW_2^N$ matrices are also scaled diagonally dominant (SDD) as discussed in~\cite{boman2005factor}. The reader unfamiliar with SDD matrices is referred to~\cite{berman1994nonnegative} for details. We only highlight that the individual entries of SDD matrices satisfy a particular set of constraints.

As discussed in~\cite{ahmadi2017dsos} the size of the cone $\cFW_2^N$ is significantly smaller than the size of the PSD cone, therefore, the restricted problem may be infeasible or the optimal solution of the $\cFW_2^N$ program may be significantly different from the optimal solution of the original SDP. There are several approaches to bridge this restriction gap, cf.~\cite{basis_pursuit}. For example, one can employ factor-width-$k$ matrices, which can be decomposed into a sum of PSD matrices of rank $k$. Enforcing this constraint, however, is problematic due to a large number of $k\times k$ PSD constraints, which is $N$ choose $k$, i.e., $N \choose k$. Therefore, the computational burden can actually increase in comparison to the original SDP.

In this paper, we take a different route in order to enrich the cone of factor-width-two matrices: We draw inspiration from SDD matrices and consider their block extension. The key idea of this extension is to partition a matrix into a set of non-intersecting blocks of entries and enforce the SDD constraints on these blocks instead of the individual entries~\cite{feingold1962block}. We introduce the class of \emph{block factor-width-two matrices} based on the block SDD definitions from~\cite{sootla2016existence,sootla2017blocksdd}. A block factor-width-two matrix is also PSD and the constraint ``the matrix is block factor-width-two'' can be enforced using a number of PSD constraints whose size is determined by the size of the blocks. We proceed by deriving a hierarchy of inner and outer approximations of the PSD cone based on the block partition. We propose to use this approximation in SDPs by replacing the PSD cone constraint with a block ``factor-width-two'' constraint. The optimal objective value of the SDPs typically cannot be achieved using this technique, however, the computational cost is reduced. Striking the balance between the accuracy of the solution and the speed can be delicate in general, therefore, we envision the feasibility of SDPs without a specific sparsity structure as the main application. For example, finding a Lyapunov function certifying stability of a nonlinear system often results in a feasibility SDP without a particular sparsity structure. Therefore, in this paper, we mainly focus on SOS programs as an application.

In Section~\ref{s:prel} we cover preliminaries. In Section~\ref{s:block-sdd} we introduce block factor-width-two matrices, a hierarchy of inner and outer approximations of the PSD cone and their SDP and SOS applications. We present numerical examples in Section~\ref{s:examples} and conclude the paper in Section~\ref{s:con}.

\emph{Notation.} The matrix $A^T$ denotes the transpose of $A\in\R^{n\times n}$. We denote the
sets of $n$ by $n$ symmetric, positive definite, positive semidefinite matrices as $\S^n$, $\S_{+}^n$, $\S_{++}^n$, respectively. We use $I_k$ to denote an identity matrix of dimension $k \times k$.

\section{Preliminaries}\label{s:prel}

\subsection{Partitioned Matrices} \label{ss:partitioned-matrices}

We say that a matrix $A\in\R^{N\times N }$ has \emph{$\alpha=\{k_1, \dots, k_p\}$-partition} with $N = \sum\limits_{i = 1}^p k_i$, if $A$ can be written as
\[
A = \begin{bmatrix}
A_{1 1}    & A_{1 2}     & \dots   & A_{1 p} \\
A_{2 1}    & A_{2 2}     & \dots   & A_{2 p} \\
\vdots     & \vdots      & \ddots  & \vdots  \\
A_{p 1}    & A_{p 2}     & \dots   & A_{p p}
\end{bmatrix},
\]
where $A_{i j}\in\R^{k_i\times k_j}$. For a partition $\alpha = \{k_1,\dots, k_p\}$ we define block basis matrices
\begin{equation} \label{eq:blockbasis}
E_{ij} = \begin{bmatrix} E_i^T & E_j^T \end{bmatrix}^T \in \mathbb{R}^{(k_i+k_j)\times N}, i \neq j,
\end{equation}
where
\begin{gather*}
I = \begin{bmatrix} I_{k_1} & & & \\ &  I_{k_2} & &  \\ & & \ddots & \\ & & & I_{k_p}\end{bmatrix} = \begin{bmatrix}E_1 \\ E_2 \\ \vdots \\ E_{p} \end{bmatrix},\\
E_i = \begin{bmatrix} 0 & \ldots & I_{k_i} & \ldots & 0\end{bmatrix} \in \mathbb{R}^{k_i\times N}.
\end{gather*}

We also define a relation between a partition $\beta$ of the matrix $A$ and a coarser partition $\alpha$ of the same matrix.

\begin{defn} Let $\alpha =\{k_1, \dots, k_{p_1}\}$ and $\beta = \{l_1, \dots, l_{p_2} \}$, where $p_1 < p_2$ and $\sum_{i = 1}^{p_1} l_i = \sum_{i = 1}^{p_2} k_i$. We say that $\beta$ is a sub-partition of $\alpha$ and write $\alpha \sqsupseteq \beta$, if there exist integers $\{m_i\}_{i = 1}^{p_1}$ such that $k_i = \sum_{j = m_{i} }^{m_{i+1}-1} l_{j}$ and $m_1 =1$, $m_{p_1} = p_2$, $m_i < m_{i+1}$ for all $i$.
\end{defn}

For example, given $\alpha = \{4,2\}$, $\beta = \{2,2,2\}$ and $\gamma = \{1,1,1,1,1,1\}$, we have $\alpha \sqsupseteq \beta \sqsupseteq \gamma$.

\subsection{Semidefinite and sum-of-squares programming} \label{ss:optim-progs}
The standard \emph{primal-form} semidefinite program (SDP) is an optimisation problem of the form:
\begin{equation}
\begin{aligned}
\min_{X} \quad & \langle C,X \rangle, \\
\text{subject to} \quad & \langle A_i,X \rangle= b_i, i = 1, \ldots, m, \\
& X \in \mathbb{S}^N_+,
\end{aligned} \label{prog:standard-primal}
\end{equation}
where $C, A_i \in \mathbb{S}^N, i = 1, \ldots, m$ and $b \in \mathbb{R}^m$ are given problem data.

SDPs have found many applications in linear systems theory, such as stabilization and $\mathcal{H}_2/\mathcal{H}_{\infty}$ control~\cite{boyd1994linear}. Also, nonlinear control problems in a polynomial field can often be written as polynomial optimisation programs: Given a set of polynomials $f_0(x), f_1(x), \ldots, f_m(x)$ (their coefficients are given) with $x\in\R^n$ and the vector $b \in \mathbb{R}^m$ we aim to solve
\begin{equation}
\begin{aligned}
\min_{y} \quad & \langle b, y \rangle, \\
\text{subject to}  \quad & f_0(x) + \sum_{i=1}^m y_i f_i(x) \ge 0,\,\,\forall x\in\R^n.
\end{aligned} \label{prog:pop}
\end{equation}

Even though the nonnegativity constraint in~\eqref{prog:pop} is convex, the program is infinite dimensional due to the dependence on $x$. Therefore, a tractable sum-of-squares relaxation of the nonnegative constraint is typically used. Given $x \in \mathbb{R}^n$, a polynomial $p(x)$ of degree $2d$ is called a sum-of-squares (SOS) polynomial if it can be written into a sum of squares of other polynomials of degree no greater than $d$. It is known (\cite{papachristodoulou2005tutorial,ParriloPhD}) that $p(x)$ admits an SOS decomposition if and only if there exists $Q \in \mathbb{S}^N_+$ with $N = {n + d \choose d}$ such that
\begin{equation} \label{eq:SOS}
p(x) = v_d(x)^T Q v_d(x),
\end{equation}
where $v_d(x)$ is a vector of monomials of degree no greater than $d$. Replacing the nonnegative constraint with an SOS constraint yields the following optimisation program:
\begin{equation}
\begin{aligned}
\min_{y}\quad& \langle b, y \rangle, \\
\text{subject to} \quad& v_d(x)^T Q v_d(x) = \\
                       & \qquad\qquad f_0(x) + \sum_{i=1}^m y_i f_i(x),\,\forall x\in\R^n,\\
&Q\in\mathbb{S}_+^N,
\end{aligned} \label{prog:sos}
\end{equation}
where the constraints imply that $f_0(x) + \sum_{i=1}^m y_i f_i(x)$ is an SOS polynomial. Matching the coefficients on both sides polynomial equality leads to a set of linear equality constraints on $Q$ and $y$, and we obtain an SDP of the form~\eqref{prog:standard-primal} with additional free variables.

\subsection{Factor Width of Positive Semidefinite Matrices}

It is well-known that small and medium-sized SDPs can be solved up to an arbitrary accuracy in polynomial time via interior point methods~\cite{boyd2004convex}. However, as the size of the PSD cone $N$ in~\eqref{prog:standard-primal} increases, the current state-of-the-art interior point algorithms become impractical in terms of memory requirements, computational burden or numerical accuracy. In the recent work~\cite{ahmadi2017dsos} it was proposed to speed up semidefinite and SOS optimisation by replacing the PSD cone by a cone of factor-width-two matrices. This work is based on the following definitions from~\cite{boman2005factor}.

\begin{defn}
	A matrix $X\in\S_+^{N}$ belongs to the class of factor-width-$k$ matrices (denoted as $\cFW_k^N$) if and only if
	\begin{gather*}
	X = \sum\limits_{i = 1}^s e_i^T X_i e_i, \text{  with  } X_i \in \S^k_+,\, e_i \in \cT_k,
	\end{gather*}
	where $\cT_k$ is a collection of matrices $e_i\in\R^{k \times N}$ with every row having only one non-zero element equal to one, the columns being orthonormal, and
	$s  = {N \choose k}$.
\end{defn}

The matrices $e_{i}$ can be seen as a decomposition basis for the matrix $X$. It can be shown that a dual (with respect to the trace inner product) set to $\cFW_k^N$ can be characterised as follows
\begin{gather*}
(\cFW_k^N)^\ast = \{Z\in \S^N | e_{i} Z e_{i}^T \in \S_{+}^{k},\,\, \forall e_i \in \cT_k\}.
\end{gather*}
One can also show that the following hierarchy of inner  and outer  approximations of $\S_+^N$ holds:
\begin{equation*}
\begin{aligned}
\cFW_1^N \subset &\cFW_{2}^N \subset \ldots \subset \cFW_N^N = \S_+^N = \\
&(\cFW_N^N)^\ast \subset \ldots \subset(\cFW_{2}^N)^\ast \subset (\cFW_1^N)^\ast.
\end{aligned}
\end{equation*}

Replacing $\mathbb{S}^N_+$ in~\eqref{prog:standard-primal} with $\cFW_{k}^N$ leads to a restriction with multiple $k \times k$ PSD cones. In particular, the factor-width-two matrices can be written as
\begin{gather*}
X = \sum\limits_{i = 1}^{N-1} \sum\limits_{j=i+1}^N E_{i j}^T X_{i j} E_{i j}, \text{  with  } X_{i j} \in \S^2_+,
\end{gather*}
where the matrices $E_{i j}$ are defined as in Section~\ref{ss:partitioned-matrices} with $\alpha = \{1, \dots, 1\}$ and $p = N$. The constraints $X_{i j}\in\S^2_+$ can be equivalently written as second-order cone constraints, which can be solved much faster compared to solving SDPs. This fact has been used to solve large-scale SOS programs in~\cite{ahmadi2017dsos}. However, the solution from $\cFW_2^N$ might be very conservative. As was pointed out in~\cite{ahmadi2017dsos}, increasing the factor width may reduce the degree of conservatism, but this requires working with a combinatorial number ${N \choose k}$ of PSD cones of size $k \times k$, which is not practical.

\section{Block Factor-width-two Matrices}\label{s:block-sdd}

\subsection{Block factor-width-two matrices}
In this section, we introduce the class of block factor-width-two matrices, which is less conservative than $\cFW_2^N$ and more scalable than $\cFW_k^N$ ($k \geq 3$).
\begin{defn}
	We say that $\alpha =\{k_1, \dots, k_p\}$-partitioned matrix $X\in\S^N$ belongs to the class $\cFW^N_{\alpha, 2}$ if and only if
	\begin{gather*}
	X = \sum\limits_{i = 1}^{p-1}\sum\limits_{j = i+1}^p E_{i j}^T X_{i j} E_{i j},
	\end{gather*}
	where $X_{i j} \in \S^{k_i + k_j}_+$ and $E_{ij}$ are defined in~\eqref{eq:blockbasis}.
\end{defn}

It is straightforward to show that the set $\cFW_{\alpha,2}^N$ is a cone with a non-empty interior, which is also:
\begin{itemize}
	\item{\it convex:} for any $X, Y\in\cFW_{\alpha,2}^N$, $0\leq \theta \leq 1$, we have that $\theta X + (1 - \theta) Y\in\cFW_{\alpha,2}^N$,
	\item{\it salient:} for any nonzero $X\in\cFW_{\alpha,2}^N$, $-X\not \in\cFW_{\alpha,2}^N$,
	\item{\it pointed:} the zero matrix is in $\cFW_{\alpha,2}^N$.
\end{itemize}
We will show in what follows that this cone is additionally closed, which makes it a proper cone (closed, convex, pointed, salient cone with non-empty interior).

The main difference with the definition of factor-width-two matrices comes in the partition $\alpha$, which dictates the sizes of $X_{i j}$'s and $E_{i j}$'s, as well as their number. The number of basis matrices $E_{i j}$ is the same as in the case when we treat every block $X_{i j}$ as a scalar and apply the factor width decomposition to it. In our definition, we have a fixed partition $\alpha$ and a fixed ``block factor-width'', which is equal to two. In order to create a hierarchy of approximations of $\S^N_+$ we can increase the ``block factor-width'', which means increasing the number of basis matrices $E_{i j}$. However, we can also build a hierarchy based on the partition coarsening, which reduces the number of basis matrices $E_{i j}$.

\begin{thm}\label{prop:fw-alpha-k}
	Given  $\alpha=\{k_1,\dots, k_p\}$, $\beta = \{\widetilde k_1, \dots, \widetilde k_q \}$ and $\alpha\sqsupseteq \beta$, we have the following inclusion:
	\begin{multline*}
	\cFW_2^N = \cFW_{\bfone,2}^N \subset \cFW_{\beta, 2}^N \subset \cFW_{\alpha, 2}^N\subset \\
	\cFW_{\bfone,\max_{i \ne j}\{k_i + k_j\}}^N \subset \cFW_{\{K_1, K_2\}, 2}^{N} = \S_+^N,
	\end{multline*}
	where $\bfone = \{1,1,\ldots, 1\}$, $K_1$, $K_2$ are positive integers and $K_1+K_2 = N$.
\end{thm}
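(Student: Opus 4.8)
The chain to be established contains five links, and the first observation is that three of them are essentially definitional, so the plan is to dispose of these immediately and then concentrate on the two inclusions that carry real content. For the singleton partition $\bfone=\{1,\dots,1\}$ each $E_{ij}$ selects exactly the coordinates $i$ and $j$ and each $X_{ij}\in\S_+^2$, so the defining sum for $\cFW_{\bfone,2}^N$ is verbatim the factor-width-two decomposition recalled in Section~\ref{ss:optim-progs}; hence $\cFW_2^N=\cFW_{\bfone,2}^N$. At the other end, for the two-block partition $\{K_1,K_2\}$ there is only the single pair $(1,2)$ and $E_{12}=[E_1^T\ E_2^T]^T=I_N$, so the defining sum collapses to $X=X_{12}\in\S_+^N$, giving $\cFW_{\{K_1,K_2\},2}^N=\S_+^N$. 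Finally $\cFW_{\bfone,k^\ast}^N\subset\S_+^N$, with $k^\ast=\max_{i\ne j}\{k_i+k_j\}$, is immediate since every factor-width matrix is a sum of PSD matrices (here $\cFW_{\bfone,k^\ast}^N$ is read as the scalar factor-width-$k^\ast$ cone $\cFW_{k^\ast}^N$, consistent with $\cFW_{\bfone,2}^N=\cFW_2^N$).

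The heart of the argument is a single monotonicity lemma: if $\alpha\sqsupseteq\beta$ then $\cFW_{\beta,2}^N\subset\cFW_{\alpha,2}^N$. This covers both remaining inclusions at once, because $\beta\sqsupseteq\bfone$ holds for every $\beta$, so $\cFW_{\bfone,2}^N\subset\cFW_{\beta,2}^N$ is just the case $(\alpha,\beta)\mapsto(\beta,\bfone)$. To prove the lemma I would take $X\in\cFW_{\beta,2}^N$ with decomposition $X=\sum_{i<j}(E_{ij}^\beta)^T X_{ij}E_{ij}^\beta$. Since $\alpha\sqsupseteq\beta$, each $\beta$-block $r$ lies inside a unique $\alpha$-block $\phi(r)$, and the $\beta$-selections factor through the coarser ones: if $\phi(i)=\phi(j)=I$ then $E_{ij}^\beta=F_{ij}E_I^\alpha$, while if $\phi(i)=I\ne J=\phi(j)$ then $E_{ij}^\beta=G_{ij}E_{IJ}^\alpha$, for suitable $0/1$ selection matrices $F_{ij},G_{ij}$. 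Substituting and setting $Y_{ij}=F_{ij}^T X_{ij}F_{ij}\in\S_+^{k_I}$ or $Z_{ij}=G_{ij}^T X_{ij}G_{ij}\in\S_+^{k_I+k_J}$ rewrites each term either as a purely diagonal piece $(E_I^\alpha)^T Y_{ij}E_I^\alpha$ or as an off-diagonal piece $(E_{IJ}^\alpha)^T Z_{ij}E_{IJ}^\alpha$.

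The step I expect to be the main obstacle is what to do with the diagonal pieces, since the $\alpha$-decomposition offers no diagonal-only basis matrix: the ``across'' terms group painlessly, summing the relevant $Z_{ij}$ into one $W_{IJ}\in\S_+^{k_I+k_J}$ for each pair $\{I,J\}$, but the ``within'' terms are supported on a single block. The key observation resolving this is that a diagonal block hides inside an off-diagonal basis matrix: because $E_{IJ}^\alpha=[(E_I^\alpha)^T\ (E_J^\alpha)^T]^T$, one has $(E_{IJ}^\alpha)^T\diag{Y,0}E_{IJ}^\alpha=(E_I^\alpha)^T Y E_I^\alpha$. Since $p\ge 2$, every block $I$ admits a partner $J\ne I$, so I would fold $\diag{\sum Y_{ij},0}$ into the corresponding $W_{IJ}$; being a sum of PSD matrices it remains in $\S_+^{k_I+k_J}$. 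Collecting everything yields $X=\sum_{I<J}(E_{IJ}^\alpha)^T\widehat W_{IJ}E_{IJ}^\alpha$ with $\widehat W_{IJ}\in\S_+^{k_I+k_J}$, i.e. $X\in\cFW_{\alpha,2}^N$, establishing the lemma.

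It remains to handle $\cFW_{\alpha,2}^N\subset\cFW_{k^\ast}^N$. Here I would note that each atom $(E_{ij}^\alpha)^T X_{ij}E_{ij}^\alpha$ is PSD and supported on only $k_i+k_j\le k^\ast\le N$ coordinates; after padding $E_{ij}^\alpha$ with $k^\ast-(k_i+k_j)$ further coordinates and bordering $X_{ij}$ by zeros it takes the form $e^T M e$ with $e\in\cT_{k^\ast}$ and $M\in\S_+^{k^\ast}$, so summing shows $X$ has factor width at most $k^\ast$. Chaining the equalities, the monotonicity lemma, and these two support arguments gives the full inclusion chain. The delicate point is exactly the diagonal-absorption observation above; everything else is bookkeeping with selection matrices.
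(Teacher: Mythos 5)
Your proof is correct and takes essentially the same approach as the paper: the definitional links are handled identically, and the key inclusion $\cFW_{\beta,2}^N\subset\cFW_{\alpha,2}^N$ rests on the same two mechanisms, namely re-indexing cross-block terms by a congruence with a selection matrix and embedding each within-block PSD piece as a diagonal block of an off-diagonal term $\widetilde X_{IJ}$. The only immaterial differences are that the paper iterates single adjacent-block merges and spreads the within-block piece over all $p-1$ partners with weight $\tfrac{1}{p-1}$, whereas you handle a general coarsening in one step and assign each within-block piece to a single chosen partner.
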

\begin{proof}
	First, $\cFW_2^N = \cFW_{\bfone,2}^N$, $\cFW_{\max_{i \ne j}\{k_i + k_j\}} \subset \cFW_{\{K_1, K_2\}, 2}^{N} = \S_+^N $ hold by definition. Furthermore, $\cFW_{\alpha, 2}^N\subset \cFW_{\bfone, \max_{i \ne j}\{k_i + k_j\}}^N$ is true since in the decomposition for $\cFW_{\alpha, 2}^N$ we use PSD matrices of dimension at most $\max_{i \ne j}\{k_i + k_j\}$.
	
	In order to prove $\cFW_{\beta, 2}^N \subset \cFW_{\alpha, 2}^N$ it suffices to consider the case $\beta = \{k_1, \dots,k_{p-1}, k_p, k_{p+1} \}$, $\alpha = \{k_1, \dots, k_{p-1}, k_{p} + k_{p+1}\}$. Let $E_{\beta i j}$ for $i,j = 1, \dots, p+1$ be the decomposition basis for the $\beta$-partition and $E_{\alpha i j}$ for $i,j = 1, \dots, p$ be the decomposition basis the $\alpha$-partition. By the premise, there exist $X_{ i j}\in\S_+^{k_i+k_j}$ such that:
	\begin{multline*}
	X = \sum\limits_{i = 1}^{p}\sum\limits_{j = i+1}^{p+1} E_{\beta i j}^T X_{ i j} E_{\beta i j} = \sum\limits_{i = 1}^{p-1}\sum\limits_{j = i+1}^{p-1} E_{\beta i j}^T X_{i j} E_{\beta i j} +\\ \sum\limits_{i = 1}^{p-1} E_{\beta i p}^T X_{i p} E_{\beta i p} +\sum\limits_{i = 1}^{p} E_{\beta i (p+1)}^T X_{i (p+1)} E_{\beta i (p+1)}.
	\end{multline*}
	We need to construct $\widetilde X_{ i j}$ so that $X$ is decomposed as:
	\begin{align}
	X=\sum\limits_{i = 1}^{p-1}\sum\limits_{j = i+1}^{p} E_{\alpha i j}^T \widetilde X_{ i j} E_{\alpha i j}. \label{decomp_alpha}
	\end{align}
	Since the first $p-1$ blocks in both partitions are the same, we have that $E_{\alpha i j} = E_{\beta i j}$ and $\widetilde X_{i j} = X_{i j}$ for all $i, j <p$. Therefore, in order to obtain the decomposition~\eqref{decomp_alpha}, it remains to construct $\widetilde X_{i p}$ for $i =  1,\dots, p-1$ such that
	\begin{multline}\label{decom_ident}
	\sum\limits_{i = 1}^{p-1} E_{\alpha i p}^T \widetilde X_{i p} E_{\alpha i p} = \sum\limits_{i = 1}^{p-1} E_{\beta i p}^T X_{i p} E_{\beta i p} +\\\sum\limits_{i = 1}^{p} E_{\beta i (p+1)}^T X_{i (p+1)} E_{\beta i (p+1)}.
	\end{multline}
	Consider the matrices $X_{i j}$ for $i < p$ and $j = p, p+1$ and split them according to the partition
	\begin{gather*}
	X_{i j} = \begin{pmatrix}
	X_{i j}^{1 1} &     X_{i j}^{1 2} \\
	X_{i j}^{1 2}  &     X_{i j}^{2 2}
	\end{pmatrix},
	\end{gather*}
	where $X_{i j}^{1 1}\in \S_+^{k_i}$, $X_{i j}^{1 2}\in \R^{k_i\times k_j}$, $X_{i j}^{2 2}\in \S_+^{k_j}$.
	It can be verified  by direct computation that the identity~\eqref{decom_ident} holds if $\widetilde X_{i p}$ for $i<p$ are chosen as follows:
	\begin{multline*}
	\widetilde X_{i p} = \begin{pmatrix}
	0& 0 & 0\\
	0 &  {X_{p (p+1)}^{1 1}} & {X_{p (p+1)}^{1 2}} \\
	0 & {X_{p (p+1)}^{1 2}} & {X_{p (p+1)}^{2 2}}\end{pmatrix}\frac{1}{p-1}
	+\\\begin{pmatrix}
	X_{i (p+1)}^{1 1}& 0& X_{i (p+1)}^{1 2}\\
	0 & 0 & 0 \\
	X_{i (p+1)}^{1 2} & 0 & X_{i (p+1)}^{2 2}
	\end{pmatrix}+    \begin{pmatrix}
	X_{i p}^{1 1} & X_{i p}^{1 2} & 0\\
	X_{i p}^{1 2} & X_{i p}^{2 2} & 0\\
	0 & 0 & 0
	\end{pmatrix}.
	\end{multline*}
	Thus we complete the proof.
\end{proof}

\begin{example} Consider the following PSD matrix
	\begin{gather*}
	X = \begin{pmatrix}
	6  &   8  &  -2   & -2\\
	8  &  16  &   1   &  1\\
	-2 &    1 &   10  &  -1\\
	-2 &    1 &   -1  &  24
	\end{pmatrix}.
	\end{gather*}
	It can be verified that $X\in\cFW_{\beta,2}^4$ for the partition $\beta = \{1, 1, 1,1\}$ and the matrices in the decomposition can be chosen as follows:
	\begin{gather*}
	X_{1 2} =\begin{pmatrix}
	4.5 &   8\\
	8   & 14.5
	\end{pmatrix}, X_{1 3} = \begin{pmatrix}
	1  &-2\\
	-2 &   6
	\end{pmatrix}, X_{2 3} = \begin{pmatrix}
	1 &  1\\
	1 &  2
	\end{pmatrix},\\
	X_{1 4} =\begin{pmatrix}
	0.5 &  -2\\
	-2  & 12
	\end{pmatrix}, X_{2 4}= \begin{pmatrix}
	0.5 &  1\\\
	1   & 6
	\end{pmatrix}, X_{3 4} =\begin{pmatrix}
	2   &-1\\
	-1  & 6
	\end{pmatrix}.
	\end{gather*}
	If we collapse the last two entries into a block and obtain the partition $\alpha = \{1, 1, 2\}$, then we can use the constructions in Theorem~\ref{prop:fw-alpha-k} in order to obtain the matrices $\widetilde X_{1 2}= X_{1 2}$,
	\begin{gather*}
	\widetilde X_{1 3} = \begin{pmatrix}
	1.5&   -2&   -2\\
	-2 &    7&-0.5\\
	-2 & -0.5& 15
	\end{pmatrix}, \widetilde X_{2 3} = \begin{pmatrix}
	1.5&   1  &  1\\
	1  &    3,&-0.5\\
	1  & -0.5 & 9 \end{pmatrix}.
	\end{gather*}
	The matrices $\widetilde X_{1 2}$, $\widetilde X_{1 3}$, $\widetilde X_{2 3}$ are PSD, which shows that $X\in\cFW_{\alpha, 2}^4$.
\end{example}

We can also describe a dual set of matrices to $\cFW_{\alpha, 2}^N$ matrices (with respect to the trace inner product), which creates an outer approximation hierarchy for the cone $\S^N_+$.
\begin{cor}
	The dual to $\cFW_{\alpha, 2}^N$ with respect to the trace inner product is defined as:
	\begin{gather*}
	(\cFW_{\alpha, 2}^N)^\ast = \{Z\in \S^N | E_{i j} Z E_{i j}^T \in \S_{+}^{k_i+k_j},\,\, \forall 1\le i<j \le p\}.
	\end{gather*}
	Furthermore, let  $\alpha=\{k_1,\dots, k_p\}$ and $\beta = \{\widetilde k_1, \dots, \widetilde k_q \}$,  $\alpha\sqsupseteq \beta$, then we have the following inclusions:
	\begin{multline*}
	(\cFW_{\bfone, 2}^N)^\ast \supset (\cFW_{\beta, 2}^N)^\ast \supset (\cFW_{\alpha, 2}^N)^\ast\supset \\
	(\cFW_{\bfone, \max_{i \ne j}\{k_i + k_j\}}^N)^\ast \supset (\cFW_{\{K_1, K_2,\}, 2}^N)^\ast = \S_+^N,
	\end{multline*}
	where $K_1$, $K_2$ are positive integer and $K_1+K_2 = N$.
\end{cor}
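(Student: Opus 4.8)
The plan is to split the corollary into two independent pieces: the closed-form expression for the dual cone, and the dualised hierarchy. The hierarchy follows almost mechanically once the dual-cone formula is established, so the real content lies in the formula itself.

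For the dual-cone formula, I would express $\cFW_{\alpha,2}^N$ as the image of a product of PSD cones under a single linear map. Define $\Phi:\prod_{1\le i<j\le p}\S^{k_i+k_j}\to\S^N$ by $\Phi\big((X_{ij})_{i<j}\big)=\sum_{i<j}E_{ij}^T X_{ij}E_{ij}$, so that $\cFW_{\alpha,2}^N=\Phi\big(\prod_{i<j}\S_+^{k_i+k_j}\big)$ holds by definition. The adjoint $\Phi^\ast$ with respect to the trace inner product is read off from the cyclic identity $\tr(Z E_{ij}^T X_{ij}E_{ij})=\tr(E_{ij}Z E_{ij}^T X_{ij})$, which gives $\Phi^\ast(Z)=(E_{ij}ZE_{ij}^T)_{i<j}$. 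I would then invoke the standard duality identity for an image cone, $(\Phi(\mathcal{C}))^\ast=(\Phi^\ast)^{-1}(\mathcal{C}^\ast)$, together with the self-duality of the PSD cone, $(\S_+^{k_i+k_j})^\ast=\S_+^{k_i+k_j}$, and the fact that the dual of a product cone is the product of the duals. Combining these yields $Z\in(\cFW_{\alpha,2}^N)^\ast$ if and only if $\Phi^\ast(Z)\in\prod_{i<j}\S_+^{k_i+k_j}$, that is, if and only if $E_{ij}ZE_{ij}^T\in\S_+^{k_i+k_j}$ for every $i<j$, which is exactly the claimed set.

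If one prefers to avoid quoting the image-cone duality lemma, the same conclusion comes from a direct two-sided argument. Expanding the pairing as $\langle Z,X\rangle=\sum_{i<j}\langle E_{ij}ZE_{ij}^T,X_{ij}\rangle$, the ``if'' direction is immediate, since each summand is nonnegative whenever every block $E_{ij}ZE_{ij}^T$ is PSD. For the converse, if some block $E_{i_0j_0}ZE_{i_0j_0}^T$ fails to be PSD, I would select a PSD certificate $X_{i_0j_0}$ making that single summand negative, set all remaining $X_{ij}=0$, and conclude that $Z$ violates the dual inequality. The only point requiring care is that the blocks $X_{ij}$ can be chosen independently, which is precisely what the direct-sum structure of the decomposition guarantees; this per-block independence is the main subtlety of the whole argument.

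For the dualised hierarchy, I would apply the order-reversing property of the dual operation to the chain of Theorem~\ref{prop:fw-alpha-k}. Since $\mathcal{K}_1\subset\mathcal{K}_2$ implies $\mathcal{K}_2^\ast\subset\mathcal{K}_1^\ast$, dualising each inclusion established there flips every ``$\subset$'' into ``$\supset$'' and produces the stated chain verbatim. The terminal equality $(\cFW_{\{K_1,K_2\},2}^N)^\ast=(\S_+^N)^\ast=\S_+^N$ is again self-duality of the PSD cone. I expect no genuine obstacle in this second part; the overall difficulty of the corollary is confined to the bookkeeping with the trace inner product and the independence argument in the dual characterisation, rather than to any nontrivial estimate.
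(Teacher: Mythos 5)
Your proposal is correct, and it is both more complete than and structurally different from the paper's own argument. The paper's proof of the dual-cone formula consists of a single observation: if every block $E_{ij}ZE_{ij}^T$ is PSD, then $\tr(XZ)\ge 0$ for every $X\in\cFW_{\alpha,2}^N$. That establishes only the inclusion $\{Z\in\S^N : E_{ij}ZE_{ij}^T\succeq 0 \ \forall i<j\}\subseteq(\cFW_{\alpha,2}^N)^\ast$ and leaves the converse implicit; your direct two-sided argument supplies exactly the missing half, via a separating certificate $X_{i_0j_0}$ (e.g.\ $vv^T$ for a negative-eigenvalue direction of the failing block) with all other blocks set to zero --- and, as you note, the only thing to check is that this choice still lies in $\cFW_{\alpha,2}^N$, which is immediate since the zero matrix is PSD. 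Your primary route, writing $\cFW_{\alpha,2}^N=\Phi\bigl(\prod_{i<j}\S_+^{k_i+k_j}\bigr)$, computing the adjoint $\Phi^\ast(Z)=(E_{ij}ZE_{ij}^T)_{i<j}$, and invoking $(\Phi(\cC))^\ast=(\Phi^\ast)^{-1}(\cC^\ast)$ together with self-duality of each PSD block, is a genuinely different and cleaner packaging: it yields the equality in one step, generalises immediately to other block structures, and connects naturally to the paper's remark that $(\cFW_{\alpha,2}^N)^\ast$ is a partially separable cone in the sense of Sun, Andersen and Vandenberghe. For the hierarchy you do the same thing the paper does (dualise the chain of Theorem~\ref{prop:fw-alpha-k} using the order-reversing property of $\ast$ and self-duality of $\S_+^N$), which the paper dismisses as straightforward; no issues there.
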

\begin{proof}
	The proof of the first part follows after noticing that for any matrix $Z\in\S^N$ such that $E_{i j} Z E_{i j}^T \in \S_{+}^{k_i}$ for all $1\le i<j \le p$, and for any matrix $X \in\cFW_{\alpha,2}^N$, we have $\tr(X Z) \ge 0$. The proof of the second part is straightforward.
\end{proof}
\begin{rem}
	Using the terminology in~\cite{sun2014decomposition} the cone $(\cFW_{\alpha, 2}^N)^\ast$ is a partially separable cone, which ensures that its dual is $\cFW_{\alpha, 2}^N$ and $(\cFW_{\alpha, 2}^N)^\ast$ is a proper cone.
\end{rem}

The major difference between our hierarchy of $\cFW_{\alpha,2}^N$ and the hierarchy of $\cFW_k^N$ is the number of basis matrices, which in our case is substantially lower due to two reasons: We use factor-width-two generalisations and we coarsen the partitions. Therefore the number of basis matrices is equal to $p(p-1)/2$ for $\alpha = \{k_1,\dots, k_p\}$, and as we make a partition coarser the number $p$ and hence the number of basis matrices decreases. We note however that the set $\cFW_3^N$ is not contained in $\cFW_{\alpha, 2}^N$ for $p>2$. This is because $\cFW_3^N$ contains \emph{all possible combinations of} $e_i$'s as the basis vectors (read all possible partitions). In contrast, $\cFW_{\alpha, 2}^N$ will not consider certain choices of partitions. Therefore, our approach has a particular advantage in applications where partitions come as a natural property of the problem.

\subsection{Applications to SDPs in the Standard Primal Form} \label{section:ApplicationsSDP}
The main idea is to replace the cone $\S_+^N$ with $\cFW_{\alpha,2}^N$ or its dual in order to obtain a restriction of the original program. Consider a restriction of~\eqref{prog:standard-primal}, where we assume the matrix $X$ is partitioned according to $\alpha = \{k_1, \dots, k_p \}$,
\begin{equation}
\begin{aligned}
\min_{X} \quad & \langle C,X \rangle, \\
\text{subject to} \quad & \langle A_i,X \rangle= b_i, i = 1, \ldots, m \\
& X \in \cFW_{\alpha,2}^N.
\end{aligned} \label{prog:standard-restr_s1}
\end{equation}

This program can be cast in the SDP form as follows:
\begin{equation}
\begin{aligned}
\min_{X_{lj}} \quad          & \sum_{j = 1}^{p-1}\sum_{l = j+1}^p \langle E_{l j} C E_{l j}^T,X_{l j}  \rangle, \\
\text{subject to} \quad & \sum_{j = 1}^{p-1}\sum_{l = j+1}^p \langle E_{l j} A_i E_{l j}^T, X_{l j} \rangle= b_i, i = 1, \ldots, m \\
                        & X_{l j} \in  \S_+^{k_l + k_j},\,\, 1 \le j < l\le p.
\end{aligned} \label{prog:primal-restr}
\end{equation}
which is amenable for a straightforward implementation in standard SDP solvers such as SeDuMi~\cite{Sedumi}, MOSEK~\cite{andersen2000mosek} or SCS~\cite{ocpb:16}. This program has the same number of equality constraints as~\eqref{prog:standard-primal}, but the number and the dimensions of PSD constraints are different. We can also perform a relaxation of~\eqref{prog:standard-primal} by replacing $X\in\S_+^N$ by $X\in(\cFW_{\alpha, 2}^N)^\ast$. We will not discuss the relaxation in detail since we focus on the restriction of the primal SDP.

\section{Numerical Examples} \label{s:examples}
In our numerical examples, we used YALMIP~\cite{YALMIP} in order to reformulate the polynomial optimisation program into a standard SDP and we solve the SDPs using MOSEK~\cite{andersen2000mosek}\footnote{Code is available via \url{https://github.com/zhengy09/SDPfw}.}
\subsection{Polynomial Optimisation} \label{ss:unstructured}
We consider the polynomial optimisation problem:
\begin{equation} \label{Eq:ex1}
\begin{aligned}
\min_{\gamma} \quad & -\gamma \\
\text{subject to} \quad & q(x) - \gamma \geq 0,\,\,\, \forall x\in \R^n,
\end{aligned}
\end{equation}
where
\begin{multline*}
q(x) = ((3-2 x_1) x_1-2 x_2+1)^2 + \\
+\sum\limits_{i =2}^{n-1} ((3 -2 x_i) x_i - x_{i-1} - 2 x_{i +1} +1)^2 + \\
+((3-2 x_n) x_n - x_{n-1} + 1)^2 + \left(\sum\limits_{i = 1}^n x_i\right)^2.
\end{multline*}
We added the last term, so that the problem does not enjoy the structure exploited by the methods in~\cite{waki2006sums, zheng2018sparse}. We vary $n$ and obtain different semidefinite optimisation problems in the standard primal form with constraints of different sizes listed in Table~\ref{tab:pop_time}.

\begin{table} \caption{Computational Results for an SDP Partition in Section~\ref{ss:unstructured}}\label{tab:pop_time}
	\centering
	\begin{tabular}{l|c|cccc}
		\multirow{2}{10pt}{$n$}   & \multirow{2}{30pt}{Full SDP} & \multicolumn{4}{c}{Number of  Partitions in SDP Variables} \\
		&                              & $4$             &  $10$              &  $20$            & $50$        \\
		\hline
		\hline
		\multicolumn{6}{c}{{Computational Time (seconds)}} \\
		\hline
		$10$                    &  $2.38$                      & $1.43$           & $1.29$            & $1.28$           & $1.49$      \\
		$15$                    &  $27.3$                      & $23.3$           & $15.6$            & $10.1$           & $5.36$      \\
		$20$                    &  $489$                       & $252$            &  $98.1$           & $66.8$           & $28.1$      \\
		$25$                    &  $\infty$                    & $1.97\cdot 10^3$ &  $7.83\cdot 10^2$ & $5.71\cdot 10^2$ & $1.32\cdot 10^2$\\
		$30$                    &  $\infty$                    & $\infty$         &  $5.68\cdot 10^3$ & $3.71\cdot 10^3$ & $8.4\cdot 10^2$\\
		\hline
		\multicolumn{6}{c}{{Objective values}} \\
		\hline
		$10$                      &   $-0.9$                    & $-0.45$ &  $134$ &  $483$ & $2.12\cdot 10^3$ \\
		$15$                      &   $-0.92$                   & $-0.75$ & $80.1$ &  $459$ & $2.24\cdot 10^3$ \\
		$20$                      &   $-0.87$                   & $-0.87$ &$-0.11$ &  $251$ & $1.91\cdot 10^3$ \\
		$25$                      &   $\infty$                  & $-1.07$ &$-0.21$ &  $231$ & $1.36\cdot 10^3$ \\
		$30$                      &   $\infty$                  &$\infty$ &$-0.37$ &  $177$ & $1.77\cdot 10^3$ \\
		\hline
		\multicolumn{6}{c}{Sizes of SDP Constraints} \\
		\hline
		$10$                      &   $66$                    & $32-34$ &  $12-14$ &  $6-8$ & $2-4$ \\
		$15$                      &   $136$                   & $68$ & $26-28$ &  $12-14$ & $4-6$ \\
		$20$                      &   $231$                   & $114-116$ &$46-48$ &  $22-24$ & $8-10$ \\
		$25$                      &   $351$                  & $174-176$ &$70-72$ &  $34-36$ & $14-16$ \\
		$30$                      &   $496$                  &$248$ &$98-100$ &  $48-50$ & $18-20$
	\end{tabular}
\end{table}

We partition the SDP as discussed in Section~\ref{section:ApplicationsSDP}. We fix the partition size $p$ and we choose the size of the blocks as the closest integers to $N/p$, where $N$ is the size of the SDP constraint. In particular, if $k_1\le N/n\le k_2$, then we pick the maximum number of blocks of size $k_1$ and the rest of size $k_2$. The number of SDP constraints, as discussed above is equal to $p(p-1)/2$. Note that the number of linear constraints remains the same as in the full SDP.

We present the computational times and the objective values in Table~\ref{tab:pop_time}. It noticeable that with a finer partition we obtain faster solutions, which are, however, conservative in terms of the objective function. Fine partitions may be very useful for feasibility programs, while coarse partitions are competitive for large SDPs, where the value of the objective function is important. Note that for large-scale instances $n \geq 25$, MOSEK ran out of memory on our machine. On the other hand, our strategy of using block factor-with-two matrices can still provide a useful upper bound for~\eqref{Eq:ex1}.

\subsection{Matrix Sum-of-Squares Programming\label{ss:maxtrix_sos}}
In our second example, we show that there exists a natural partition $\alpha$ in the case of the matrix-vision of SOS programs. Indeed, consider a polynomial matrix constraint:
\begin{gather*}
P(x) = \begin{bmatrix}
p_{1 1}(x) & p_{12}(x) & \dots  & p_{1 n}(x) \\
p_{2 1}(x) & p_{22}(x) & \dots  & p_{2 n}(x) \\
\vdots     & \vdots    & \ddots & \vdots \\
p_{n 1}(x) & p_{n 2}(x) & \dots  & p_{n n}(x)
\end{bmatrix}\succeq 0,\,\, \forall x\in\R^m.
\end{gather*}
Treating this constraint directly is intractable and the usual technique is the SOS-relaxation, which results in the following reformulation~\cite{gatermann2004symmetry}
\begin{align}
\label{sos-lin-con} P(x) &= (I_n\otimes v_d(x))^T Q (I_n\otimes v_d(x)), \,\, \forall x\in\R^m\\
\label{sos-sdp-con}Q &\succeq 0,
\end{align}
where the constraint~\eqref{sos-lin-con} is actually a linear constraint linking the coefficients of $P(x)$ with the matrix $Q$. The SOS programs are known to suffer from the curse of dimensionality, in particular, the size of $Q$ grows combinatorially when we vary both $m$ and $d$. Therefore, another technique was proposed in~\cite{ahmadi2017dsos}, which replaces the constraint~\eqref{sos-sdp-con} with
\begin{align}
\label{sdsos-lin-con} P(x) &= (I_n\otimes v_d(x))^T Q (I_n\otimes v_d(x)), \,\, \forall x\in\R^m\\
\label{sdsos-sdp-con}Q &\in\cFW_2^N.
\end{align}
Now instead of the large semidefinite constraint we are dealing with a large number of $2\times 2$ PSD constraints, which can actually be cast as second order cone constraints.

In addition, we can address this problem by replacing the constraint~\eqref{sos-sdp-con} with $Q\in\cFW_{\alpha, 2}^N$ with a natural choice of $\alpha$. In particular, we assume that $P(x)\in \cFW_2^n$ for every $x\in\R^n$ resulting in the decomposition
\begin{gather*}
P(x) = \sum_{i = 1}^{p-1}\sum_{j = i+1}^p E_{i j}^T P_{i j} (x) E_{i j},\,\,\, P_{i j}(x)\succeq 0,
\end{gather*}
and only then use the SOS relaxation on the polynomial matrices $P_{i j}(x)$ of the dimension $2\times 2$. In order to avoid the question of existence of such decompositions, we restrict the search of $P(x)$ to the following set of constraints:
\begin{gather*}
P(x) = \sum_{i = 1}^{p-1}\sum_{j = i+1}^p E_{i j}^T P_{i j} (x) E_{i j},\,\,\, P_{i j}(x) \text{ is SOS}.
\end{gather*}
Some rudimentary linear algebra results in the following reformulation of the PSD constraints:
\begin{align}
\label{block-sdsos-lin-con} P(x) &= (I_n\otimes v_d(x))^T Q (I_n\otimes v_d(x)), \,\, \forall x\in\R^m,\\
\label{block-sdsos-sdp-con}Q &\in\cFW_{\alpha, 2}^N,
\end{align}
where $\alpha$ is pre-determined.

Using the $\cFW_{\alpha, 2}^N$ restriction provides with a larger set of solutions than $\cFW_{\bfone, 2}^N$. For example, the polynomial matrix:
{\small \begin{multline*}
	P(x)= \\\begin{bmatrix}
	4 x^2 + 9 y^2 +0.315 &  x+y                 & x+y \\
	x+y                  & 9 x^2 + 4 y^2 +0.315 & x+y \\
	x+y                  & x+y                  & x^2 + 25 y^2 +0.315
	\end{bmatrix}
	\end{multline*}}
satisfies the constraints~(\ref{block-sdsos-lin-con}, \ref{block-sdsos-sdp-con}), but does not satisfy the constraints~~(\ref{sdsos-lin-con}, \ref{sdsos-sdp-con}).

We further test our approach on the following program:
\begin{gather}
\begin{aligned}
\max_{P(x), \gamma}~~~& \gamma \\
& P(x) - \gamma I  \in \S_+^n,~~\forall x\in\R^3,
\end{aligned}\label{prog:sos-matrix}
\end{gather}
where every entry of $P(x)$ is a random polynomial of degree two in three variables, and we vary the dimension of $P(x)$. The computational results are depicted in Table~\ref{tab:mat_sos}. Our restriction offers faster computational solutions with almost the same optimal objectives compared to the standard SOS technique, while the technique~\cite{ahmadi2017dsos} provides even faster solutions, but their quality is worse.

\begin{table}\caption{Computational results for Section~\ref{ss:maxtrix_sos}} \label{tab:mat_sos}
	\centering
	\begin{tabular}{c|ccccccc}
		\multicolumn{8}{c}{Computational time} \\
		$n$  &  $20$ &  $25$ &  $30$ &  $35$ &  $40$ &  $45$  &  $50$ \\
		\hline
		\hline
		SOS               & $5.28$ & $14.4$& $35.9$& $87.2$& $175.0$& $316.0$& $487.8$ \\
		$\cFW_{\alpha,2}$ & $7.90$& $10.8$& $16.6$ & $25.3$ & $36.0$ & $57.4$ & $71.4$  \\
		$\cFW_{2}$     & $1.04$& $1.1$& $1.3$ & $1.6$ & $2.1$ & $2.6$ & $3.3$  \\
		\multicolumn{8}{c}{Objective value} \\
		\hline
		\hline
		SOS               & $149.0$ & $266.5$& $316.2$& $460.8$& $562.0$& $746.9$& $919.8$ \\
		$\cFW_{\alpha,2}$ & $149.0$ & $266.5$& $316.2$& $460.8$& $562.0$& $746.9$& $919.8$ \\
		$\cFW_{2}$        & $154.4$ & $270.3$& $324.8$& $477.7$& $570.9$& $762.2$& $961.7$
	\end{tabular}
\end{table}

\section{Conclusion and Discussion} \label{s:con}
We introduced a novel class of matrices and presented a hierarchy of inner and outer approximations of the cone of positive semidefinite (PSD) matrices. Both inner and outer approximations are proper cones and enjoy useful duality relations, furthermore, the inclusion certificates for these cones is a set of PSD constraints smaller than the size of the matrix. This allows deriving a hierarchy of scalable relaxations and restrictions of semidefinite programs (SDPs). The inner approximations (cones $\cFW_{\alpha, 2}^N$) are built by partitioning the matrix into a non-intersecting set of entries.
It is not entirely clear at the moment how to build ``the best'' partition in terms of the solution of the particular SDP. However, in some problems, the partition comes naturally from the problem formulation, \emph{e.g.}, the matrix-version of SOS programs discussed in Section~\ref{ss:maxtrix_sos}. Our numerical experiments suggest that these hierarchies can be used for dense large-scale SDPs, which arise in SOS programming.

Our future work will investigate the consequences of block factor-width-two matrices in relevant control applications that involve SDPs. Also, it would be interesting to incorporate the properties of block factor-width-two matrices in the development of first-order algorithms (\emph{e.g.}, the solvers~\cite{ocpb:16,zheng2016fast}) for solving general SDPs.
\balance


\begin{thebibliography}{10}
\providecommand{\url}[1]{#1}
\csname url@rmstyle\endcsname
\providecommand{\newblock}{\relax}
\providecommand{\bibinfo}[2]{#2}
\providecommand\BIBentrySTDinterwordspacing{\spaceskip=0pt\relax}
\providecommand\BIBentryALTinterwordstretchfactor{4}
\providecommand\BIBentryALTinterwordspacing{\spaceskip=\fontdimen2\font plus
\BIBentryALTinterwordstretchfactor\fontdimen3\font minus
  \fontdimen4\font\relax}
\providecommand\BIBforeignlanguage[2]{{%
\expandafter\ifx\csname l@#1\endcsname\relax
\typeout{** WARNING: IEEEtran.bst: No hyphenation pattern has been}%
\typeout{** loaded for the language `#1'. Using the pattern for}%
\typeout{** the default language instead.}%
\else
\language=\csname l@#1\endcsname
\fi
#2}}

\bibitem{boyd2004convex}
S.~Boyd and L.~Vandenberghe, \emph{Convex optimization}.\hskip 1em plus 0.5em
  minus 0.4em\relax Cambridge university press, 2004.

\bibitem{boyd1994linear}
S.~Boyd, L.~El~Ghaoui, E.~Feron, and V.~Balakrishnan, \emph{Linear matrix
  inequalities in system and control theory}.\hskip 1em plus 0.5em minus
  0.4em\relax Siam, 1994, vol.~15.

\bibitem{powers1998algorithm}
V.~Powers and T.~W{\"o}rmann, ``An algorithm for sums of squares of real
  polynomials,'' \emph{J Pure Applied Algebra}, vol. 127, no.~1, pp. 99--104,
  1998.

\bibitem{parrilo2003semidefinite}
P.~A. Parrilo, ``Semidefinite programming relaxations for semialgebraic
  problems,'' \emph{Math Prog}, vol.~96, no.~2, pp. 293--320, 2003.

\bibitem{lasserre2010moments}
J.-B. Lasserre, \emph{Moments, positive polynomials and their
  applications}.\hskip 1em plus 0.5em minus 0.4em\relax World Scientific, 2010,
  vol.~1.

\bibitem{zheng2016fast}
Y.~Zheng, G.~Fantuzzi, A.~Papachristodoulou, P.~Goulart, and A.~Wynn, ``Fast
  {ADMM} for semidefinite programs with chordal sparsity,'' in \emph{Proc Am
  Control Conf}, 2017, pp. 3335--3340.

\bibitem{zheng2016admmdual}
------, ``Fast {ADMM} for homogeneous self-dual embeddings of sparse {SDP}s,''
  \emph{Proc IFAC-PapersOnLine}, vol.~50, no.~1, pp. 8411--8416, 2017.

\bibitem{zheng2017chordal}
------, ``Chordal decomposition in operator-splitting methods for sparse
  semidefinite programs,'' \emph{Math Prog A}, 2019.

\bibitem{nakata2003exploiting}
K.~Nakata, K.~Fujisawa, M.~Fukuda, M.~Kojima, and K.~Murota, ``Exploiting
  sparsity in semidefinite programming via matrix completion {II}:
  Implementation and numerical results,'' \emph{Mathematical Programming},
  vol.~95, no.~2, pp. 303--327, 2003.

\bibitem{kim2011exploiting}
S.~Kim, M.~Kojima, M.~Mevissen, and M.~Yamashita, ``Exploiting sparsity in
  linear and nonlinear matrix inequalities via positive semidefinite matrix
  completion,'' \emph{Math Prog}, vol. 129, no.~1, pp. 33--68, 2011.

\bibitem{ocpb:16}
B.~O'Donoghue, E.~Chu, N.~Parikh, and S.~Boyd, ``Conic optimization via
  operator splitting and homogeneous self-dual embedding,'' \emph{J
  Optimization Theory Appl}, vol. 169, no.~3, pp. 1042--1068, June 2016.

\bibitem{waki2006sums}
H.~Waki, S.~Kim, M.~Kojima, and M.~Muramatsu, ``Sums of squares and
  semidefinite program relaxations for polynomial optimization problems with
  structured sparsity,'' \emph{SIAM Journal on Optimization}, vol.~17, no.~1,
  pp. 218--242, 2006.

\bibitem{zheng2018sparse}
Y.~Zheng, G.~Fantuzzi, and A.~Papachristodoulou, ``Sparse sum-of-squares
  ({SOS}) optimization: A bridge between {DSOS/SDSOS} and {SOS} optimization
  for sparse polynomials,'' in \emph{To appear in Proc Am Control Conf}, 2019.

\bibitem{zheng2018decomposition}
------, ``Decomposition and completion of sum-of-squares matrices,'' in
  \emph{Proc IEEE Conf Decision Control}, 2018, pp. 4026--4031.

\bibitem{vandenberghe2015chordal}
L.~Vandenberghe, M.~S. Andersen, \emph{et~al.}, ``Chordal graphs and
  semidefinite optimization,'' \emph{Foundations and Trends{\textregistered} in
  Optimization}, vol.~1, no.~4, pp. 241--433, 2015.

\bibitem{chordalZheng2018ecc}
Y.~Zheng, M.~Kamgarpour, A.~Sootla, and A.~Papachristodoulou, ``Scalable
  analysis of linear networked systems via chordal decomposition,'' in
  \emph{Proc European Control Conf}, 2018, pp. 2260--2265.

\bibitem{ahmadi2017dsos}
A.~A. Ahmadi and A.~Majumdar, ``{DSOS} and {SDSOS} optimization: more tractable
  alternatives to sum of squares and semidefinite optimization,'' \emph{arXiv
  preprint arXiv:1706.02586}, 2017.

\bibitem{boman2005factor}
E.~G. Boman, D.~Chen, O.~Parekh, and S.~Toledo, ``On factor width and symmetric
  {H}-matrices,'' \emph{Linear algebra and its applications}, vol. 405, pp.
  239--248, 2005.

\bibitem{berman1994nonnegative}
A.~Berman and R.~J. Plemmons, \emph{Nonnegative Matrices in the Mathematical
  Sciences}.\hskip 1em plus 0.5em minus 0.4em\relax SIAM, 1994, vol.~9.

\bibitem{basis_pursuit}
A.~A. Ahmadi and G.~Hall, ``Sum of squares basis pursuit with linear and second
  order cone programming,'' \emph{Contemporary Mathematics}, vol. 685, pp.
  25--54, 2017.

\bibitem{feingold1962block}
D.~Feingold and R.~Varga, ``Block diagonally dominant matrices and
  generalizations of the {G}erschgorin circle theorem,'' \emph{Pacific J.
  Math}, vol.~12, no.~4, pp. 1241--1250, 1962.

\bibitem{sootla2016existence}
A.~Sootla and J.~Anderson, ``On existence of solutions to structured {L}yapunov
  inequalities,'' in \emph{Proc of Am Control Conf}.\hskip 1em plus 0.5em minus
  0.4em\relax IEEE, 2016, pp. 7013--7018.

\bibitem{sootla2017blocksdd}
A.~Sootla, Y.~Zheng, and A.~Papachristodoulou, ``Block-diagonal solutions to
  {L}yapunov inequalities and generalisations of diagonal dominance,'' in
  \emph{Proc IEEE Conf Decision Control}, Dec 2017.

\bibitem{papachristodoulou2005tutorial}
A.~Papachristodoulou and S.~Prajna, ``A tutorial on sum of squares techniques
  for systems analysis,'' in \emph{Proc Am Control Conf}.\hskip 1em plus 0.5em
  minus 0.4em\relax IEEE, 2005, pp. 2686--2700.

\bibitem{ParriloPhD}
P.~A. Parrilo, ``Structured semidefinite programs and semialgebraic geometry
  methods in robustness and optimization,'' Ph.D. dissertation, California
  Institute of Technology, 2004.

\bibitem{sun2014decomposition}
Y.~Sun, M.~S. Andersen, and L.~Vandenberghe, ``Decomposition in conic
  optimization with partially separable structure,'' \emph{SIAM Journal on
  Optimization}, vol.~24, no.~2, pp. 873--897, 2014.

\bibitem{Sedumi}
J.~F. Sturm, ``Using {SeDuMi} 1.02, a {MATLAB} toolbox for optimization over
  symmetric cones,'' \emph{Optim. Method. Softw.}, vol. 11--12, pp. 625--653,
  1999.

\bibitem{andersen2000mosek}
E.~D. Andersen and K.~D. Andersen, ``The {MOSEK} interior point optimizer for
  linear programming: an implementation of the homogeneous algorithm,'' in
  \emph{High performance optimization}.\hskip 1em plus 0.5em minus 0.4em\relax
  Springer, 2000, pp. 197--232.

\bibitem{YALMIP}
J.~L\"{o}fberg, ``{YALMIP}: A toolbox for modeling and optimization in
  {MATLAB},'' in \emph{Proc. IEEE Int. Symposium Comput.-Aided Control Syst.
  Des.}, Taipei, Taiwan, 2004, pp. 284--289.

\bibitem{gatermann2004symmetry}
K.~Gatermann and P.~A. Parrilo, ``Symmetry groups, semidefinite programs, and
  sums of squares,'' \emph{J Pure Appl Algebra}, vol. 192, no. 1-3, pp.
  95--128, 2004.

\end{thebibliography}
\end{document}